\newcommand{\RR}{\mathbb  R}
\newcommand{\Z}{\mathbb  Z}
\numberwithin{equation}{section}
\newtheorem{theorem}[equation]{Theorem}
\newtheorem{proposition}[equation]{Proposition}
\newtheorem{lemma}[equation]{Lemma}
\newtheorem{remark}[equation]{Remark}
\begin{document}
\title{Discrete Fractional Integration Operators Along the Primes}

\author{Ben Krause}
\address{
Department of Mathematics,
Caltech \\
Pasadena, CA 91125}
\email{benkrause2323@gmail.com}
\date{\today}

\begin{abstract}
We prove that the discrete fractional integration operators along the primes
\[ T^{\lambda}_{\mathbb{P}}f(x) := \sum_{p} \frac{f(x-p)}{p^{\lambda}} \cdot \log p \]
are bounded $\ell^p\to \ell^{p'}$ whenever $ \frac{1}{p'} < \frac{1}{p} - (1-\lambda), \ p > 1.$
Here, the sum runs only over prime $p$.
\end{abstract}

\maketitle


\section{Introduction}
The topic of this paper is discrete harmonic analysis, with a  focus on discrete analogues of fractional integral operators. While elementary arguments link the discrete operators, 
\[ J_{d,\lambda}f(x):=\sum_{0 \neq m \in \mathbb{Z}^d} \frac{f(x-m)}{|m|^{d \cdot \lambda}}\]
to their continuous analogues -- which leads to the expected range of norm estimates: $J_{d,\lambda}$ maps $\ell^{p}(\mathbb{Z}^d) \to \ell^{p'}(\mathbb{Z}^d)$ when $1/p' \leq 1/p - (1-\lambda)$ -- the problem becomes much more subtle upon the introduction of radon behavior, where the primary objects of consideration (in the one-dimensional setting) are of the form
\begin{equation}\label{e:fracmon}
I_\lambda^sf(x) := \sum_{m \geq 1} \frac{f(x-m^s)}{m^\lambda}.
\end{equation}
While it is expected that $I_{\lambda}^s$ maps $\ell^p \to \ell^{q}$ when 
\begin{itemize}
    \item $1/p  > 1-\lambda$ and
    \item $\frac{1}{p'} \leq \frac{1}{p} - \frac{1-\lambda}{s}$,
\end{itemize}
significant number-theoretic complications aside from the $s=2$ case, treated in \cite{SW00, SW02, Ob, IW}, have made it difficult to obtain these full range of exponents; see \cite{P0} for a discussion of these number-theoretic complications. Indeed, Pierce's work on fractional radon transforms \cite{P} along curves of the form (say)
\[ \sum_{0 \neq m \in \mathbb{Z}^d} \frac{f(x-m,y-Q(m))}{|m|^{d \cdot\lambda} }, \]
$Q$ a quadratic form, further established the link between the quadratic nature of the the curves in question and the ability to obtain wide range of $\ell^p$ estimates for fractional radon transforms.

In this short note, we explore the case of fractional radon transforms along the {primes},
\begin{equation}\label{e:fracP}
T^{\lambda}_{\mathbb{P}}f(x) := \sum_{p } \frac{f(x-p)}{p^{\lambda}} \cdot \log p;
\end{equation}
these fractional radon transforms do not have a quadratic nature to the operator (the sum runs over prime $p$, and the presence of the logarithm is a normalizing factor, appearing from density considerations).

Nevertheless, drawing upon the techniques of \cite{KesLac}, we prove the following theorem.

\begin{theorem}\label{t:main}
Suppose that $p > 1$ and $1/p' < 1/p - (1-\lambda)$. Then \eqref{e:fracP} maps $\ell^p \to \ell^{p'}$.
\end{theorem}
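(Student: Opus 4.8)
\medskip
\noindent\textbf{Strategy of proof.} Note that $T^{\lambda}_{\mathbb{P}}$ is convolution on $\Z$ with the nonnegative kernel $K$ given by $K(n)=n^{-\lambda}\log n$ if $n$ is prime and $K(n)=0$ otherwise. The plan is to decompose $K$ dyadically in the size of $n$ and to estimate each piece by interpolating elementary $\ell^1$ and $\ell^\infty$ bounds, in the spirit of the scheme of \cite{KesLac}. Write $K=\sum_{j\ge0}K_j$ with $K_j=K\cdot\mathbf{1}_{[2^j,2^{j+1})}$, and set $T_j f=f*K_j$, so that $T^{\lambda}_{\mathbb{P}}=\sum_j T_j$. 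I will bound $\|T_j\|_{\ell^p\to\ell^{p'}}$ by a quantity that is summable in $j$ exactly when $1/p'<1/p-(1-\lambda)$.

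For the single-scale bounds, first observe that every nonzero value of $K_j$ is comparable to $j\,2^{-j\lambda}$, so $\|K_j\|_{\ell^\infty}\asymp j\,2^{-j\lambda}$ and $T_j:\ell^1\to\ell^\infty$ has norm $\lesssim j\,2^{-j\lambda}$. Second, Chebyshev's bound $\sum_{p\le x}\log p\asymp x$ (the prime number theorem being more than enough) gives
\[
\|K_j\|_{\ell^1}=\sum_{2^j\le p<2^{j+1}}\frac{\log p}{p^{\lambda}}\asymp 2^{-j\lambda}\!\!\sum_{2^j\le p<2^{j+1}}\!\!\log p\asymp 2^{j(1-\lambda)},
\]
so $T_j:\ell^1\to\ell^1$ and $T_j:\ell^\infty\to\ell^\infty$ have norm $\lesssim 2^{j(1-\lambda)}$. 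Interpolating (Riesz--Thorin, equivalently Young's inequality applied to $K_j\in\ell^r$ with $\tfrac1r=1+\tfrac1{p'}-\tfrac1p$) yields, whenever $p'\ge p$,
\[
\|T_j f\|_{\ell^{p'}}\lesssim\|K_j\|_{\ell^\infty}^{\,1-1/r}\,\|K_j\|_{\ell^1}^{\,1/r}\,\|f\|_{\ell^p}\asymp j^{\,1-1/r}\,2^{\,j(1/r-\lambda)}\,\|f\|_{\ell^p}.
\]
Now $\sum_{j\ge0}j^{1-1/r}2^{j(1/r-\lambda)}<\infty$ if and only if $1/r<\lambda$, i.e.\ $1+\tfrac1{p'}-\tfrac1p<\lambda$, i.e.\ $1/p'<1/p-(1-\lambda)$, which is precisely the hypothesis (and it forces $p'>p$); summing over $j$ then gives $\|T^{\lambda}_{\mathbb{P}}f\|_{\ell^{p'}}\le\sum_j\|T_j f\|_{\ell^{p'}}\lesssim\|f\|_{\ell^p}$. (One can dispense with the decomposition entirely: the same arithmetic shows $K\in\ell^r$ directly, and a single application of Young's inequality finishes the proof.)

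The step that carries the content --- and, equally, the reason there is no serious obstacle --- is the size estimate $\|K_j\|_{\ell^1}\asymp 2^{j(1-\lambda)}$: after the logarithmic normalization, the primes in a dyadic block carry the same $\ell^1$ mass as the integers in that block, so $T^{\lambda}_{\mathbb{P}}$ behaves like the $s=1$ instance of \eqref{e:fracmon}, which the introduction already describes as elementary. Unlike the monomial transforms $I^s_\lambda$ with $s\ge2$, there is no oscillatory sum over the sequence to control here: since $K_j\ge0$ one has $\|\widehat{K_j}\|_\infty=\widehat{K_j}(0)=\|K_j\|_{\ell^1}$, so an $\ell^2\to\ell^2$ input would add nothing and no circle-method analysis is needed. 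The price of this softness is that the argument cannot reach the endpoint $1/p'=1/p-(1-\lambda)$, where the extra factor of $\log p$ makes the sum over $j$ diverge --- which is exactly why Theorem~\ref{t:main} is stated with a strict inequality. The only care required is the uniformity in $j$ of the prime-counting estimate and the routine summation of the geometric series.
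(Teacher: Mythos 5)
Your argument is correct, and it reaches the theorem by a genuinely different --- and far more elementary --- route than the paper. Both proofs reduce to a single-scale estimate (the paper's Proposition \ref{p:key} for the normalized kernels $K_k$ is, up to the renormalization $2^{k(1-\lambda)}K_k$ versus your $K_j$, the same statement as your bound on $\|T_j\|_{\ell^p\to\ell^{p'}}$). The difference is how that estimate is proved. You exploit the fact that the kernel is nonnegative and of essentially full density: Young's inequality together with $\|K_j\|_{\ell^r}\le\|K_j\|_{\ell^\infty}^{1-1/r}\|K_j\|_{\ell^1}^{1/r}$ and Chebyshev's bound $\sum_{p\le x}\log p\lesssim x$ gives $\|T_jf\|_{\ell^{p'}}\lesssim j^{1-1/r}2^{j(1/r-\lambda)}\|f\|_{\ell^p}$, which sums exactly on the stated open range; your single-scale bound is in fact slightly stronger than the paper's (a power of $\log$ in place of an $\epsilon$-power of the scale). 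The paper instead runs the Kesler--Lacey scheme: reduction to a restricted weak-type inequality, a splitting $f=f_1+f_2$ via the Ionescu--Wainger projections of Theorem \ref{Multtheorem}, the circle-method approximation of $\widehat{K_k}$ from Lemma \ref{l:app}, an $\ell^1\to\ell^\infty$/$\ell^2\to\ell^2$ interpolation for $K_k*f_1$ (the same interpolation you use), and an $\ell^2$ bound for the high-frequency piece using the totient decay \eqref{e:tot}. As you correctly observe, for the theorem as stated that arithmetic machinery buys nothing, since for a positive kernel $\|\widehat{K_k}\|_\infty=\|K_k\|_{\ell^1}$ and there is no cancellation to harvest in the open range; its value is that it is the part of the argument one would need to push toward the endpoint $1/p'=1/p-(1-\lambda)$, where $K\notin\ell^r$ and positivity alone fails --- an endpoint the paper's remark explicitly concedes it does not reach. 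Two small points to keep your write-up honest: only the upper bound in $\|K_j\|_{\ell^1}\asymp2^{j(1-\lambda)}$ is actually used (Chebyshev suffices; the matching lower bound on a dyadic block wants the prime number theorem), and the appeal to Young presumes $p'\ge p$, i.e.\ $0<\lambda\le1$, which is implicit in the fractional-integration setting.
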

\begin{remark}
The hard restriction $\frac{1}{p'} < 1/p - (1-\lambda)$ is an artifact of the real interpolation method we use; a soft inequality is expected, and would follow from complex methods. We do not pursue this issue here.
\end{remark}

\subsection{Notation}
Here and throughout, $e(t) := e^{2\pi i t}$. We let $\mu$ and $\phi$ denote the M\"{o}bius and totient functions, respectively. A key estimate is the lower bound 
\begin{equation}\label{e:tot}
    \phi(q) \gtrsim_\epsilon q^{1-\epsilon}
\end{equation}
valid for any $\epsilon > 0$.


For $k \geq 1$ we let
\begin{equation}\label{e:prime1}
    K_k(x) := \frac{1}{2^k} \sum_{p \leq 2^k} \delta_p(x) \cdot \log p,
\end{equation}
where the sum runs over only prime $p$.

We will make use of the modified Vinogradov notation. We use $X \lesssim Y$, or $Y \gtrsim X$, to denote the estimate $X \leq CY$ for an absolute constant $C$. We use $X \approx Y$ as shorthand for $Y \lesssim X \lesssim Y$. We also make use of big-O notation: we let $O(Y )$ denote a quantity that is $\lesssim Y$. If we need $C$ to depend on a parameter, we shall indicate this by subscripts, thus for instance $X \lesssim_p Y$ denotes the estimate $X \leq C_p Y$
for some $C_p$ depending on $p$. We analogously define $O_p(Y)$.

\section{The Argument}
By an appeal to the triangle inequality, Theorem \ref{t:main} will follow from the following proposition.

\begin{proposition}\label{p:key}
For any $p > 1$,
\begin{equation}\label{e:mainest}
\| K_k*f\|_{\ell^{p'}} \lesssim_\epsilon 2^{-k \cdot (1/p - 1/p' - \epsilon)} \|f\|_{\ell^p}.
\end{equation}
\end{proposition}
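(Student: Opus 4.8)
The plan is to interpolate between two endpoint estimates for the single-scale operator $K_k*$: a trivial $\ell^1\to\ell^\infty$ bound and an $\ell^2\to\ell^2$ bound that must be essentially dyadic-scale-free (i.e., no loss in $k$ beyond $\epsilon$). The $\ell^1\to\ell^\infty$ estimate is immediate: since $K_k$ has $\ell^\infty$ norm controlled by $\frac{1}{2^k}\max_{p\le 2^k}\log p \lesssim \frac{k}{2^k} \lesssim_\epsilon 2^{-k(1-\epsilon)}$, we get $\|K_k*f\|_{\ell^\infty}\lesssim_\epsilon 2^{-k(1-\epsilon)}\|f\|_{\ell^1}$, which is \eqref{e:mainest} at the point $(1/p,1/p') = (1,0)$. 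Interpolating this (by the real interpolation / Marcinkiewicz method, which is the source of the strict inequality flagged in the remark) with a bound of the form $\|K_k*f\|_{\ell^2}\lesssim_\epsilon 2^{-k\epsilon}\|f\|_{\ell^2}$ yields exactly the claimed family of estimates for $1 < p \le 2 \le p'$; the full stated range $1/p' < 1/p - (1-\lambda)$ with $p>1$ then follows since the exponent of $2^{-k}$ demanded, namely $1/p - 1/p'$, sits below the line obtained from interpolation once one also uses that for $p' \ge p$ trivially (and duality, since $K_k$ is essentially self-adjoint up to reflection).

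The heart of the matter is therefore the $\ell^2$ estimate: one must show $\|K_k*f\|_{\ell^2}\lesssim_\epsilon 2^{k\epsilon}\|f\|_{\ell^2}$, equivalently $\|\widehat{K_k}\|_{\ell^\infty(\mathbb{T})}\lesssim_\epsilon 2^{k\epsilon}$. Here $\widehat{K_k}(\theta) = \frac{1}{2^k}\sum_{p\le 2^k} e(-p\theta)\log p$, which up to the normalization is the classical exponential sum over primes. The strategy is the Hardy--Littlewood circle method following \cite{KesLac}: partition $\mathbb{T}$ into major arcs around rationals $a/q$ with small denominator and a minor-arc complement. On the minor arcs, Vinogradov's bound gives $|\sum_{p\le N}e(p\theta)\log p| \lesssim_\epsilon N^{1-\delta}$ for some $\delta>0$ (or at worst $N\,(\log N)^{-A}$), which after dividing by $2^k = N$ is $\lesssim 2^{-k\delta}$ — more than enough. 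On the major arcs one uses the asymptotic $\sum_{p\le N}e(pa/q)\log p \approx \frac{\mu(q)}{\phi(q)}N$ plus an error, so that $\widehat{K_k}(a/q + \beta)$ is comparable to $\frac{\mu(q)}{\phi(q)}$ times a smooth factor; invoking the lower bound \eqref{e:tot} that $\phi(q)\gtrsim_\epsilon q^{1-\epsilon}$ controls this by $q^{-1+\epsilon} \le 1$, again with room to spare. Summing the $L^\infty$ contributions over all relevant arcs costs at most a power of $k$, absorbed into $2^{k\epsilon}$.

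I expect the main obstacle to be not the $\ell^2$ bound in isolation but making the interpolation deliver the \emph{entire} claimed range cleanly — in particular handling the regime $p > 2$ (where one should dualize, replacing $K_k$ by its reflection $\tilde K_k(x) = K_k(-x)$, which has the same Fourier multiplier bound) and verifying that the two endpoint exponents $(1,0)$ and $(1/2,1/2)$ interpolate to cover every $(1/p,1/p')$ with $1/p' \le 1/p$; the strict inequality in the hypothesis of Theorem \ref{t:main} is precisely what lets the real-interpolation output meet the target line $1/p-1/p' = 1-\lambda$ after one also exploits the $\epsilon$ of slack in both endpoints. A secondary technical point is bookkeeping the $\log$-normalization and the $\frac{1}{2^k}$ factor consistently through the circle-method asymptotics so that the $k$-dependence never exceeds $2^{k\epsilon}$; this is routine but must be done with care since the naive prime-counting error terms carry logarithmic factors.
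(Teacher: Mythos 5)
Your core idea --- interpolating the single-scale operator between an $\ell^1\to\ell^\infty$ bound of size $k2^{-k}$ and an $\ell^2\to\ell^2$ bound of size $O(1)$ --- is correct and does prove the estimate on the dual line, but it is a genuinely different and far more elementary route than the paper's. The paper reduces to the restricted weak-type estimate \eqref{e:mainest1} and runs a Kesler--Lacey style high/low decomposition: $f$ is split by the Ionescu--Wainger projections of Theorem \ref{Multtheorem} onto major-arc frequencies, $\widehat{K_k}$ is replaced by the Mirek--Trojan--Zorin-Kranich approximant of Lemma \ref{l:app}, the low piece $K_k*f_1$ is handled exactly as you propose (interpolation of $\ell^1\to\ell^\infty$ against $\ell^2\to\ell^2$), and the high piece gains $k^{-C}$ on $\ell^2$ because only denominators $q\gtrsim k^{C_0}$ survive and $\phi(q)^{-1}\lesssim_\epsilon q^{-1+\epsilon}$. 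That machinery is what one would need to push past the stated estimate (toward the endpoint line or an $\epsilon$-free bound); for \eqref{e:mainest} as written, your elementary argument suffices. One remark on your write-up: the entire circle-method discussion of the $\ell^2$ bound is unnecessary. Since $K_k\geq 0$, the $\ell^2\to\ell^2$ norm is at most $\|K_k\|_{\ell^1}=2^{-k}\sum_{p\leq 2^k}\log p\lesssim 1$ by Chebyshev's estimate; no Vinogradov minor-arc input is needed. (Also note $\widehat{K_k}(0)\approx 1$, so the gain $2^{-k\epsilon}$ you first state is false, but the loss $2^{k\epsilon}$ you later state is what the target at $(1/2,1/2)$ actually requires, and it holds trivially.)

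The one genuine gap is your treatment of exponent pairs off the dual line. Interpolating only the endpoints $(1/p,1/p')=(1,0)$ and $(1/2,1/2)$ produces the segment $1/p+1/p'=1$, and the nesting $\ell^{q}\subset\ell^{p'}$ for $q\leq p'$ moves you to smaller $1/p'$ while keeping the constant $2^{-k(2/p-1)}$, which is \emph{larger} than the required $2^{-k(1/p-1/p')}$ when $1/p'<1-1/p$; so that step, as sketched, does not deliver the full range, and duality alone does not repair it. The clean fix is to bypass interpolation entirely with Young's inequality: writing $1+1/p'=1/r+1/p$, Chebyshev gives $\|K_k\|_{\ell^r}\lesssim (k2^{-k})^{1-1/r}=(k2^{-k})^{1/p-1/p'}$, whence
\[ \|K_k*f\|_{\ell^{p'}}\lesssim k\cdot 2^{-k(1/p-1/p')}\|f\|_{\ell^p} \]
for every $1\leq p\leq p'\leq\infty$, which is stronger than \eqref{e:mainest}. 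Equivalently, adjoin the trivial diagonal endpoints $\ell^1\to\ell^1$ and $\ell^\infty\to\ell^\infty$, each with constant $\|K_k\|_{\ell^1}=O(1)$, to your interpolation family. With that replacement your proof is complete and self-contained.
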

Since our range of exponents is open, it suffices to prove a restricted weak-type estimate:
\begin{equation}\label{e:mainest1}
\langle K_k*\mathbf{1}_F, \mathbf{1}_G \rangle \lesssim_\epsilon
|Q| \cdot \left( \frac{|F|}{|Q|} \right)^{1 - \epsilon} \cdot \left( \frac{|G|}{|Q|} \right)^{1 - \epsilon}
\end{equation}
whenever $F, G \subset Q := Q_k := \{ |x| \lesssim 2^k\}$.

It is \eqref{e:mainest1} to which we turn.

We next recall the following multi-frequency multiplier theorems, which in turn grew out of \cite{IW}.

\subsection{A Multi-Frequency Multiplier Theorem for Ionescu-Wainger Type Multipliers}\label{s:IW}
The results of this section appear as the special one-dimensional case of \cite[Theorem 5.1]{MST1}, a refinement of \cite[Theorem 1.5]{IW} (which would also be adequate for our purposes).
\begin{theorem}[Special Case]\label{Multtheorem}
Suppose that $m(\xi)$ is an $L^p(\RR)$ multiplier with norm $A$:
\[ \|  \left( m(\xi) f(\xi) \right)^{\vee} \|_{L^p(\RR)} \leq A \| f \|_{L^p(\RR)}.\]
Let $\rho >0$ be arbitrary (for applications, we will take $0<\rho \ll_{p} 1$). Then, for every $N$, there exists an absolute constant $C_\rho > 0$ so that one may find a set of rational frequencies
\[ \left \{ \frac{a}{q} \text{ reduced} : q \leq N \right \} \subset \mathcal{U}_N \subset 
\left \{ \frac{a}{q} \text{ reduced} : q \leq C_\rho e^{N^{\rho}} \right \},\]
so that
\begin{equation}\label{IWM}
\left( \sum_{\theta \in \mathcal{U_N}} m(\alpha - \theta) \eta_N(\alpha - \theta) \hat{f}(\beta) \right)^{\vee} 
\end{equation}
has $\ell^p$ norm $\lesssim_{\rho,p} A \cdot \log N$. Here, $\eta_N$ is a smooth bump function supported in a ball centered at the origin of radius $\leq e^{-N^{2\rho}}$.
\end{theorem}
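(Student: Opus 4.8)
This statement is precisely the one--dimensional case of the Ionescu--Wainger multi--frequency multiplier theorem, and the plan is not to seek a shortcut but to reproduce the Ionescu--Wainger construction and bootstrap, in the refined form of \cite{MST1} (the cruder estimate of \cite{IW}, with a larger but still subpolynomial loss in $N$, would already suffice for us). The argument has three ingredients: a combinatorial construction of the set $\mathcal{U}_N$, a bound for the part of the multiplier supported over a single denominator that is \emph{uniform} in that denominator, and an almost--orthogonal assembly of these single--denominator pieces over all of $\mathcal{U}_N$ that loses only a logarithmic factor.

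First I would build $\mathcal{U}_N$. Fix $\rho>0$, fix a small constant $c=c(\rho)$, and split the primes at the threshold $N^{c\rho}$ into \emph{small} and \emph{large} ones. Every $q\leq N$ factors as $q=q_{\mathrm{sm}}\cdot q_{\mathrm{lg}}$ with $q_{\mathrm{sm}}$ being $N^{c\rho}$--smooth and $q_{\mathrm{lg}}$ a product of at most $j_0=j_0(\rho)<\infty$ large prime powers (more would force $q_{\mathrm{lg}}>N^{j_0 c\rho}\geq N$). Each such $q_{\mathrm{sm}}$ divides the single integer $M_N:=\prod_{p\leq N^{c\rho}}p^{\lfloor\log N/\log p\rfloor}$, and by the prime number theorem $\log M_N\leq\pi(N^{c\rho})\log N\lesssim_\rho N^{c\rho}$. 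Declaring $\mathcal{U}_N$ to be the set of reduced $a/q$ with $q=q'q_{\mathrm{lg}}$, $q'\mid M_N$, and $q_{\mathrm{lg}}$ a product of at most $j_0$ large prime powers each $\leq N$, one gets $\{a/q:q\leq N\}\subseteq\mathcal{U}_N$, while for $c$ small the denominators appearing stay $\leq C_\rho e^{N^\rho}$, as required.

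Second, the single--denominator estimate: for every fixed $Q\leq C_\rho e^{N^\rho}$, the operator with $\mathbb{T}$--multiplier $\sum_{(a,Q)=1}m(\alpha-a/Q)\,\eta_N(\alpha-a/Q)$ is bounded on $\ell^p(\mathbb{Z})$ with norm $\lesssim_p A$, uniformly in $Q$. Here one exploits that $\eta_N$ is supported in a ball of radius $\leq e^{-N^{2\rho}}$, vastly smaller than the separation $1/Q\geq c_\rho e^{-N^\rho}$ of the points $a/Q$ in $\mathbb{T}$: the bumps around distinct reduced fractions are pairwise disjoint, and the Magyar--Stein--Wainger sampling principle transfers the $L^p(\RR)$ bound $A$ for $m$ to this arithmetic model with only an $O_p(1)$ loss.

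Finally --- and this is where the work, and the $\log N$, live --- I would sum the single--denominator operators over all of $\mathcal{U}_N$ losing only a logarithmic factor, where the triangle inequality over the $\approx N^2$ fractions would cost a power of $N$. Following Ionescu--Wainger, one bootstraps over the $j_0=O_\rho(1)$ ``slots'' for large prime powers: at each step one appends the moduli contributed by one more slot; since the $\eta_N$--bumps about distinct reduced fractions are disjoint, a square--function (Littlewood--Paley) estimate controls the step on $\ell^2$ with no loss, and a Rademacher--Menshov square--function inequality promotes it to $\ell^p$ at the cost of a logarithmic factor; tracking the $\lesssim\log N$ dyadic scales of the denominators through the $O_\rho(1)$ steps and appealing to \cite{MST1} so that these per--step losses telescope, the cumulative loss is $O_{\rho,p}(\log N)$, with the single--denominator bound and the previous step feeding the induction. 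I expect this last ingredient to be the main obstacle: the arithmetic partition, the vector--valued transference, and the Rademacher--Menshov machinery must be made to interlock so that the total loss is only logarithmic in $N$ --- although for Proposition \ref{p:key} any subpolynomial loss would already do --- and this is the technical heart of \cite{IW,MST1}, whose argument I would follow.
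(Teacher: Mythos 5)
The paper gives no proof of this theorem at all: it is quoted verbatim as the one-dimensional special case of \cite[Theorem 5.1]{MST1}, which refines \cite[Theorem 1.5]{IW}. Your outline correctly identifies the three ingredients of that cited argument --- the smooth/large-prime-power construction of $\mathcal{U}_N$, the uniform single-denominator bound via Magyar--Stein--Wainger sampling, and the bootstrapped square-function assembly losing only a logarithm --- so you are following the same (cited) route, and, like the paper itself, you ultimately defer the technical heart to \cite{IW} and \cite{MST1} rather than supplying an independent proof.
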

\begin{remark}
This result should be contrasted with the strongest analogous multiplier theorem for \emph{general} frequencies, which accrues a norm loss of
\[ \left( \text{Number of Frequencies} \right)^{|1/2-1/p|},\]
even in the special case when $m \in \mathcal{V}^2(\mathbb{R})$ has finite \emph{$2$-variation}, see \cite[Lemma 2.1]{C+}.
\end{remark}

\subsection{Decompositions}
Set $\mathcal{U} := \mathcal{U}_{k^{C_0}}$ for some $C_0 \gg_p 1$, where $\mathcal{U}$ is as in Theorem \ref{Multtheorem}, and choose $0 < \rho = \rho_p \ll 1$. With these choices in mind, 
decompose $f = \mathbf{1}_F$ via the Fourier transform as
\[    \hat{f} = \widehat{f_1} + \widehat{f_2},
\]
where 
\begin{equation}\label{e:flow}
\widehat{f_1}(\alpha) := \sum_{\theta \in \mathcal{U}} \eta_k(\alpha - \theta) \cdot \hat{f}(\alpha),
\end{equation}
for $\eta$ a compactly supported bump function that is one in a neighborhood of the origin, and $\eta_k(t) := \eta(2^{k^{\rho'}} t)$ for some $1 \gg \rho' \gg \rho$. 

Note that for any $p > 1$
so that 
\begin{equation}\label{e:l1comp}
\| f_1\|_{\ell^p} \lesssim \log k \cdot \|f\|_{\ell^p}.
\end{equation}

We will also decompose that Fourier transform of $K_k$
\begin{equation}\label{e:FK}
    \widehat{K_k}(\alpha) := \frac{1}{2^k} \sum_{p \leq 2^k} e( - p \alpha) \cdot \log p.
\end{equation}

To do so, we recall the following approximation result of \cite{MTZK}.

\begin{lemma}\label{l:app}
For any $A \gg 1$, there exists a $C =C(A)$ so that one may decompose $\widehat{K_k}(\alpha) = L_k'(\alpha) + \mathcal{E}_k(\alpha)$, where
\[L_k'(\alpha) := \sum_{t} L_{k,t}'(\alpha)\]
with
\begin{equation}\label{e:L}
    L_{k,t}'(\alpha) := \sum_{q \approx 2^t} \frac{\mu(q)}{\phi(q)} \sum_{(a,q) = 1} V_k(\alpha - a/q) \chi_{t}(\alpha - a/q),
\end{equation}
and $|\mathcal{E}_k| \lesssim k^{-A} $ pointwise. Here, $\chi_t(\alpha) := \chi(2^{Ct} \alpha)$ is a compactly supported bump function, and $V_k(\alpha) := \int_0^1 e(- 2^k t \cdot \alpha ) \ dt$.
\end{lemma}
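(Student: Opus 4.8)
## Proof Plan for Lemma \ref{l:app}

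The plan is to derive this decomposition from the Hardy--Littlewood circle method applied to the exponential sum over primes $\sum_{p \le 2^k} e(-p\alpha)\log p$. First I would invoke the standard major/minor arc analysis for prime exponential sums: on a major arc centered at a rational $a/q$ with $q$ small (here ``small'' meaning $q \lesssim 2^t$ with $2^t \le k^{B}$ for a suitable power $B = B(A)$), Vinogradov's estimate together with the Siegel--Walfisz theorem gives the approximation
\[
\sum_{p \le 2^k} e(-p\alpha)\log p \;\approx\; \frac{\mu(q)}{\phi(q)}\sum_{n \le 2^k} e(-n(\alpha - a/q)) + O\!\left(2^k \exp(-c\sqrt{k})\right),
\]
and after dividing by $2^k$ the main term becomes $\frac{\mu(q)}{\phi(q)} V_k(\alpha - a/q)$ up to a negligible Riemann-sum error, where $V_k(\alpha) = \int_0^1 e(-2^k t \alpha)\,dt$. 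On the minor arcs — i.e.\ when $\alpha$ is not within $2^{-Ct}$ of any $a/q$ with $q \approx 2^t$ for any $t$ with $2^t \lesssim k^{B}$ — the classical Vinogradov bound on prime exponential sums (in the Vaughan-identity form) gives $|\widehat{K_k}(\alpha)| \lesssim k^{-A}$, which we absorb into $\mathcal{E}_k$. This is exactly the content of the cited approximation result of \cite{MTZK}, so really I would just quote it; but the skeleton above is how one would reconstruct it.

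The bookkeeping steps are: (i) fix the level $A$, choose the number of dyadic scales $t$ so that $2^{t_{\max}} \asymp k^{B(A)}$, which controls how deep into the major arcs we go; (ii) choose the constant $C = C(A)$ in $\chi_t(\alpha) = \chi(2^{Ct}\alpha)$ large enough that distinct major arcs $a/q$ with $q \approx 2^t$ are disjoint and that the minor-arc complement is genuinely where Vinogradov's bound applies — one needs $2^{Ct}$ comfortably larger than $2^{2t}$ so that the arcs around fractions with denominator $\approx 2^t$ don't overlap, and so the tails of the bump functions don't interfere across scales; (iii) verify that the cutoff $\chi_t$ does not distort the main term, since $V_k$ is already essentially supported (in a quantitative $L^\infty$ sense away from where it is large) near the origin at scale $2^{-k}$, which is far finer than $2^{-Ct}$. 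Summing the major-arc main terms over all admissible $q$ (equivalently over all $t$ up to $t_{\max}$) produces $L_k'(\alpha) = \sum_t L_{k,t}'(\alpha)$, and collecting every error — the Siegel--Walfisz error, the Riemann-sum error, the minor-arc Vinogradov error, and the contribution of $q > k^{B}$ which we simply discarded — all of these are $\lesssim k^{-A}$ (after possibly enlarging $B$), giving the pointwise bound $|\mathcal{E}_k| \lesssim k^{-A}$.

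The main obstacle, and the only genuinely non-formal input, is the quality of the error term in the major-arc approximation: one needs the error to beat every fixed negative power of $k$, uniformly over all major arcs with $q$ up to a power of $k$. This is precisely where the Siegel--Walfisz theorem enters (to handle $\sum_{p \le x,\, p \equiv a \,(q)} \log p$ with power-of-log-uniformity in $q$), and it is the reason the denominators are restricted to $q \lesssim k^{B}$ rather than, say, $q \lesssim 2^{k^\rho}$ as in the Ionescu--Wainger framework; the ineffectivity of Siegel--Walfisz is harmless here since $A$ is fixed. Everything else is the routine circle-method partition of the torus plus Vinogradov's minor-arc estimate, and I would present those steps briskly, referring to \cite{MTZK} for the precise statement.
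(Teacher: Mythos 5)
The paper offers no proof of this lemma at all---it is quoted verbatim from \cite{MTZK}---and your plan, which ultimately defers to that same citation, correctly reconstructs the standard circle-method argument (Siegel--Walfisz on major arcs with $q\lesssim k^{B(A)}$, Vinogradov/Vaughan on the complement, disjointness of the arcs via the choice of $C$) that underlies it. So this is essentially the same approach as the paper, and the sketch is sound.
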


For our purposes, we will replace $L_k'$ with $L_k := \sum_{t} L_{k,t}$, where
\begin{equation}\label{e:L}
    L_{k,t}(\alpha) := \sum_{q \approx 2^t} \frac{\mu(q)}{\phi(q)} \sum_{(a,q) = 1} V_k(\alpha - a/q) \varphi_{k}(\alpha - a/q),
\end{equation}
where $\varphi$ is a compactly supported bump function, and $\varphi_k(\alpha) := \varphi(2^{k(1-\epsilon)} \alpha )$.

Specifically, we have the following lemma.
\begin{lemma}
The following estimate holds:
\[ \sup_{\alpha} |L_k(\alpha) - L_{k}'(\alpha)| \lesssim 2^{-\epsilon k}.\]
\end{lemma}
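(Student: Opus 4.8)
The plan is to compare the two multiplier approximations $L_k$ and $L_k'$ term by term. Both are sums over scales $t$ of expressions of the form $\sum_{q \approx 2^t} \frac{\mu(q)}{\phi(q)} \sum_{(a,q)=1} V_k(\alpha - a/q) \psi(\alpha - a/q)$, differing only in the bump function: $L_k'$ uses $\chi_t(\alpha) = \chi(2^{Ct}\alpha)$, while $L_k$ uses $\varphi_k(\alpha) = \varphi(2^{k(1-\epsilon)}\alpha)$. So the first step is to write $L_k(\alpha) - L_k'(\alpha) = \sum_t \sum_{q \approx 2^t} \frac{\mu(q)}{\phi(q)} \sum_{(a,q)=1} V_k(\alpha - a/q)\bigl(\varphi_k(\alpha - a/q) - \chi_t(\alpha - a/q)\bigr)$, and estimate it in sup norm. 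Since only $t$ with $2^t \lesssim 2^k$ contribute (beyond that the coefficient sum is controlled or the terms vanish — indeed $q \approx 2^t$ forces $t \le k + O(1)$ since we only need scales up to the point where the $a/q$ with $q\le 2^k$ are genuinely separated), the outer sum over $t$ has $O(k)$ terms, and I will absorb this polynomial factor into the final $2^{-\epsilon k}$ by adjusting $\epsilon$ slightly.

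Next I would fix $\alpha$ and $t$ and analyze the inner sum. The key geometric point is that the fractions $\{a/q : q \approx 2^t, (a,q)=1\}$ are $\gtrsim 2^{-2t}$-separated, so for each $\alpha$ at most one fraction $a/q$ can lie within $O(2^{-k(1-\epsilon)})$ of $\alpha$, provided $2^{-k(1-\epsilon)} \ll 2^{-2t}$, i.e. $t < k(1-\epsilon)/2$. For such $t$, the bump $\varphi_k(\alpha - a/q)$ selects at most one term; likewise $\chi_t$ selects at most one. On that single surviving fraction, I bound $|V_k(\alpha - a/q)| \le 1$ trivially and use $|\mu(q)/\phi(q)| \lesssim_\epsilon q^{-1+\epsilon} \approx 2^{-t(1-\epsilon)}$ via \eqref{e:tot}. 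The difference $\varphi_k - \chi_t$ is itself bounded, and — this is the crux — it is supported in a symmetric difference of two balls around $a/q$: the inner ball of radius $\sim 2^{-k(1-\epsilon)}$ (from $\varphi_k$) and the ball of radius $\sim 2^{-Ct}$ (from $\chi_t$). When $2^{-k(1-\epsilon)} \le 2^{-Ct}$, i.e. $t \le k(1-\epsilon)/C$, the $\varphi_k$-ball sits inside the $\chi_t$-ball where $\chi_t \equiv 1$, so on the $\varphi_k$-support we have $\varphi_k - \chi_t = \varphi_k - 1$, which is $O(2^{2k(1-\epsilon)}|\alpha - a/q|^2)$ or simply $O(1)$ but supported only where $|\alpha - a/q| \gtrsim 2^{-k(1-\epsilon)}$ is false... more carefully, $\varphi_k - 1$ vanishes near $a/q$ and the combined object $V_k \cdot (\varphi_k - 1)$ is small because $V_k(\beta) - 1 = O(2^k|\beta|)$ and on the relevant annulus $|\beta| \lesssim 2^{-k(1-\epsilon)}$, giving $O(2^{\epsilon k})$ — wrong sign.

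The main obstacle, and where I would focus the real work, is therefore organizing the telescoping so the gain is genuine: rather than comparing $\varphi_k$ to $\chi_t$ directly, I would interpolate through the unlocalized $V_k$ and use that $V_k(\beta)\bigl(1 - \psi(2^{k(1-\epsilon)}\beta)\bigr)$ decays, since $V_k(\beta) = \int_0^1 e(-2^k\beta t)\,dt$ has $|V_k(\beta)| \lesssim \min(1, 2^{-k}|\beta|^{-1})$, so away from $\beta = O(2^{-k})$ it is small, and in particular for $|\beta| \gtrsim 2^{-k(1-\epsilon)}$ we get $|V_k(\beta)| \lesssim 2^{-k}\cdot 2^{k(1-\epsilon)} = 2^{-\epsilon k}$. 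Both $\varphi_k - \chi_t$ and its "complementary" region are supported where $|\beta| \gtrsim \min(2^{-k(1-\epsilon)}, 2^{-Ct})$; choosing the constant $C = C(A)$ from Lemma \ref{l:app} and noting the relevant $t$ satisfy $t \lesssim k$, one checks $\min(2^{-k(1-\epsilon)}, 2^{-Ct}) \gtrsim 2^{-k(1-\epsilon)}$ fails for large $t$ — so I must split into $t \le \delta k$ and $t > \delta k$, handling the former via the $V_k$ decay just described and the latter via the coefficient bound $\sum_{q\approx 2^t} \frac{1}{\phi(q)}\sum_{(a,q)=1} 1 \lesssim_\epsilon 2^{t\epsilon}$ combined with $|V_k| \le 1$ and the fact that for $t > \delta k$ the two bump functions $\varphi_k, \chi_t$ actually have comparable (small) support, making their difference supported in a set so thin that only $O(1)$ fractions survive and the total is $\lesssim 2^{-t(1-\epsilon)} \cdot 2^{t\epsilon} \lesssim 2^{-\delta k(1 - 2\epsilon)}$. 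Summing the two regimes and the $O(k)$ values of $t$, and relabeling $\epsilon$, yields $\sup_\alpha|L_k(\alpha) - L_k'(\alpha)| \lesssim 2^{-\epsilon k}$ as claimed.
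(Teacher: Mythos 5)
Your proposal is correct and, once the acknowledged false start is discarded, lands on exactly the paper's argument: the difference $\varphi_k-\chi_t$ is supported where $|\beta|\gtrsim 2^{-k(1-\epsilon)}$ (for the relevant $t$), so $|V_k(\beta)|\lesssim (2^k|\beta|)^{-1}\lesssim 2^{-\epsilon k}$, and one concludes from the essential disjointness of the supports around the fractions $a/q$ together with the totient bound \eqref{e:tot}. One small correction: the displayed coefficient bound $\sum_{q\approx 2^t}\frac{1}{\phi(q)}\sum_{(a,q)=1}1\lesssim_\epsilon 2^{t\epsilon}$ is false as written (the left-hand side is $\approx 2^t$), but it is not what you actually use --- the operative estimate is that only $O(1)$ fractions survive for a fixed $\alpha$, each weighted by $1/\phi(q)\lesssim_\epsilon 2^{-t(1-\epsilon)}$, which is precisely the paper's bound and gives the required summability in $t$.
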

\begin{proof}
It suffices to show that 
\[ |L_{k,t}(\alpha) - L_{k,t}'(\alpha)| \lesssim 2^{-(\epsilon-1) t} \cdot 2^{- \epsilon k}.\]
The key point is that if $\varphi_k(\alpha) - \chi_t(\alpha)$ does not vanish, then $2^{(\epsilon - 1)k} \lesssim |\alpha|$, so that
\[ |V_k(\alpha)| \lesssim (2^k|\alpha|)^{-1} \lesssim 2^{-\epsilon k}.\]
The result then follows from the fact that $\{ \chi_t(\cdot - a/q) : (a,q) = 1, \ q \approx 2^t \}$ are disjointly supported, taking into account the decay of the totient function, \eqref{e:tot}. 
\end{proof}

With these decompositions in hand we turn to the proof.

\begin{proof}
To prove \eqref{e:mainest1} it suffices to bound
\[ |K_k* f| \leq M_1 f + M_2f, \ f = \mathbf{1}_F\]
where for any $p > 1$
\begin{equation}\label{e:LOW}
\| M_1 f \|_{p'} \lesssim \frac{k^2}{2^{k \cdot (2/p-1)}} \cdot \|f\|_p
\end{equation}
and
\begin{equation}\label{e:HIGH}
\| M_2 f\|_2 \lesssim k^{- C} \cdot \|f \|_2,
\end{equation}
where $C$ may be adjusted to be as large as we wish.

Our decomposition is as follows:
\begin{equation}\label{e:decomp}
    K_k*f = K_k * f_1 + \left( L_k \widehat{f_2} \right)^{\vee} + \left( \mathcal{E}_k \widehat{f_2} \right)^{\vee}.
\end{equation} 
We set $M_1 f:= K_k*f_1$; by interpolating between the $\ell^1 \to \ell^\infty$ bound of $\frac{k}{2^k}$, and the trivial $\ell^2 \to \ell^2$ bound, we see that
\begin{equation}\label{e:imp0}
\| K_k*f\|_{\ell^{p'}} \lesssim \frac{k}{2^{k \cdot (2/p-1)}} \cdot \|f \|_{\ell^p}, \ p > 1
\end{equation}
which leads to the estimate \eqref{e:LOW}, taking into account \eqref{e:l1comp}.

The contribution of the term involving $\mathcal{E}_k$ is absorbed into $M_2 f$, but contributes a negligible bound, as we are free to choose $A$ in Lemma \ref{l:app} as large as we wish; it suffices to show that $L_k \widehat{f_2}$ satisfies the $\ell^2$ estimate, \eqref{e:HIGH}.

In particular, we need to estimate
\[ \sum_t \| L_{k,t} \widehat{f_2} \|_2 = \sum_{t:2^t \geq k^{C_0}} \| L_{k,t} \widehat{f_2} \|_2;\]
using the decay of the totient function, \eqref{e:tot}, a bound of $k^{\epsilon - C_0}$ is obtained, which yields the result.
\end{proof}

\typeout{get arXiv to do 4 passes: Label(s) may have changed. Rerun}

\end{document}